\newtheorem{thm}{Theorem}[section]
\theoremstyle{definition}
\numberwithin{equation}{section}
\begin{document}

%%%%% To ease editing, for IMPAN journals add:

\baselineskip=17pt

%%%%%%%%%%%%%%%%

\title{Distal higher rank lattice actions on  surfaces}

\author{Enhui Shi}

\address[E.H. Shi]{School of Mathematical Sciences, Soochow University, Suzhou 215006, P. R. China}
\email{ehshi@suda.edu.cn}

\begin{abstract}
Let $\Gamma$ be a lattice in ${\rm SL}(n, \mathbb R)$ with $n\geq 3$ and $\mathcal S$ be a closed surface. Then
$\Gamma$ has no distal minimal action on $\mathcal S$.
\end{abstract}

\keywords{surface, distality, higher rank lattice, group action}
\subjclass[2010]{37B05}

\maketitle

\pagestyle{myheadings} \markboth{E. H. Shi }{The realization and classification}

\section{Introduction}

The notion of distality was introduced by Hilbert for better understanding equicontinuity (\cite{El}). The study of minimal distal systems culminates in the
 beautiful structure theorem of H. Furstenberg (\cite{Fu}), which describes completely the relations between distality and equicontinuity for minimal systems.
 An interesting question is what compact manifold can support a  distal minimal group action? Clearly, the answer to this question depends on the
 topology of the phase space and the algebraic structure of the acting group. A remarkable result says that if a nontrivial space $X$ admits a
 distal minimal actions by abelian groups, then $X$ cannot be simply connected (see e.g. \cite[Chapter 7-Theorem 16]{Au}). Thus the $2$-sphere $\mathbb S^2$
does not admit any distal minimal abelain group actions. However, it is very easy to construct a minimal isometric nonabelian group action on $\mathbb S^2$ generated by two
rigid irrational rotations around different coordinate axes of $\mathbb R^3$.

We consider the distal minimal  actions of some higher rank lattices on closed surfaces ($2$-dimensional compact connected manifolds) and obtained
the following result.

\begin{thm}\label{main theorem}
Let $\Gamma$ be a lattice in ${\rm SL}(n, \mathbb R)$ with $n\geq 3$ and $\mathcal S$ be a closed surface. Then
$\Gamma$ has no distal minimal action on $\mathcal S$.
\end{thm}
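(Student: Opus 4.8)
The plan is to combine H.\ Furstenberg's structure theorem for minimal distal systems with the rigidity of higher rank lattices. Suppose, for contradiction, that $\Gamma$ acts minimally and distally on $\mathcal S$. Since a closed surface is not a single point, the system is nontrivial, and a nontrivial minimal distal flow always has a nontrivial maximal equicontinuous factor: in the Furstenberg tower $\{*\}=Z_0\leftarrow Z_1\leftarrow\cdots\leftarrow\mathcal S$ the first stage that differs from a point must be a successor stage, hence an isometric extension of a point, hence equicontinuous. Denote this factor by $\pi\colon\mathcal S\to Y$; it is a nontrivial minimal equicontinuous $\Gamma$-system.

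Next I would pin down the structure of $Y$. A minimal equicontinuous system is isomorphic to a homogeneous space $K/H$, where $K$ is a compact topological group acting transitively by isometries of a compatible metric, $H\le K$ is a closed subgroup, and the canonical homomorphism $\varphi\colon\Gamma\to K$ has dense image. The Furstenberg tower exhibits $\mathcal S$ as an inverse limit of iterated isometric extensions over $Y$, each of which is locally a fibre bundle with compact homogeneous fibres, so $\dim Y\le\dim\mathcal S=2$; also $Y$ is connected (being a continuous image of $\mathcal S$) and infinite (being a connected Hausdorff space with more than one point). Replacing $K$ by $K/N$, where $N=\bigcap_{k\in K}kHk^{-1}$ is the kernel of the $K$-action on $Y$, we may assume the action is effective; a compact group acting effectively on a finite-dimensional, locally connected, compact metric space is a Lie group (Montgomery--Zippin), so $K$ is now a compact Lie group, infinite since $Y=K/H$ is infinite. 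Since $Y$ is then a manifold of dimension at most $2$, the linear isotropy representation embeds $H$ into $\mathrm O(2)$, so $\dim K=\dim Y+\dim H\le 3$. Thus $\varphi$ exhibits $\Gamma$ with infinite image in a compact Lie group of dimension at most $3$.

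It remains to rule this out. By Margulis' Normal Subgroup Theorem $\ker\varphi$ is finite or of finite index; it is not of finite index, for otherwise $K$ would be finite and $Y$ a finite connected space, i.e.\ a point. Hence a finite-index subgroup $\Gamma'\le\Gamma$, itself an irreducible higher rank lattice, has dense image in the identity component $K^0$, a connected compact Lie group with $\dim K^0\le 3$. If $K^0$ is not semisimple it surjects onto a nontrivial torus, producing a homomorphism from $\Gamma'$ onto a dense subgroup of a torus; this is impossible since $\Gamma'$ has Kazhdan's property (T), hence finite abelianization. If $K^0$ is semisimple then it is locally isomorphic to $\mathrm{SO}(3)$, and composing $\varphi|_{\Gamma'}$ with a faithful orthogonal representation of $K^0$ yields a representation of $\Gamma'$ with bounded, Zariski-dense image in a compact simple group; Margulis superrigidity forces such a representation to come from the arithmetic structure of $\Gamma'$, and since $\Gamma'$ is a lattice in $\mathrm{SL}(n,\mathbb R)$ with $n\ge 3$ the relevant Galois forms are either noncompact or almost simple of dimension $n^2-1\ge 8>\dim K^0$ and so cannot receive a dense image; hence the image is finite, a contradiction. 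This proves the theorem.

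I expect the real difficulty to lie in the last step, namely excluding a homomorphism from a higher rank lattice onto a dense subgroup of a connected compact simple Lie group such as $\mathrm{SO}(3)$ — precisely the case that occurs when $Y\cong\mathbb S^2$ or $Y\cong\mathbb{RP}^2$. By contrast the cases $Y\cong\mathbb S^1$ and $Y\cong\mathbb T^2$ collapse immediately once one knows $\Gamma$ has finite abelianization, and it is the $\mathrm{SO}(3)$ case — where rank-one lattices such as surface groups genuinely do admit equicontinuous minimal actions on $\mathbb S^2$ — that forces the use of superrigidity (or an equivalent classification of homomorphisms of higher rank lattices into compact Lie groups). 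A secondary, purely topological, point to be handled with care is the finite-dimensionality of $Y$ and the reduction of $Y$ to a homogeneous space of a compact Lie group, which rely on the fibre-bundle structure of isometric extensions together with the Montgomery--Zippin solution of Hilbert's fifth problem for transformation groups.
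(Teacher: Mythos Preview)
Your overall strategy coincides with the paper's: pass to a nontrivial equicontinuous factor, show that the compact Lie group acting there has dimension at most $3$, and contradict superrigidity (which you unpack via the Normal Subgroup Theorem, property~(T), and the compact-form dichotomy, and which the paper packages as a single black box: any homomorphism from $\Gamma$ into a compact Lie group with infinite image forces dimension at least $n^2-1$).

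The one place where your argument is genuinely weaker is the step $\dim Y\le 2$. You assert that the isometric extensions in the Furstenberg tower are ``locally a fibre bundle with compact homogeneous fibres'' and let the dimensions telescope; but isometric extensions of minimal flows need not be locally trivial, the tower may be transfinite, and the form of Montgomery--Zippin you then invoke (effective compact actions on finite-dimensional locally connected compacta, rather than on manifolds) is not a standard statement. The paper sidesteps all of this by reversing the two reductions: it \emph{first} applies the structure theorem for compact groups to replace $\overline{\psi(\Gamma)}$ by a Lie quotient $H/H'$, so that the resulting factor $K/H'$ is automatically a homogeneous manifold, and \emph{then} bounds $\dim(K/H')\le 2$ by combining the openness of factor maps between minimal distal systems with the classical surface-topology fact (Moore, Roberts--Steenrod, Stoilow, Whyburn) that an open map from a closed surface onto a manifold cannot raise dimension. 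This is exactly the ``secondary, purely topological point'' you flagged, and the paper's ordering dissolves it cleanly. A smaller difference: for the bound $\dim K\le 3$ you use the linear isotropy representation into $\mathrm O(2)$, while the paper stays in the topological category, producing an invariant circle around a fixed point and using that $\mathrm{SO}(2)$ is the unique maximal compact subgroup of $\mathrm{Homeo}_+(\mathbb S^1)$.
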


Here we remark that this theorem does not hold for the case $n=2$, since the free nonabelian group $\mathbb Z*\mathbb Z$
is a lattice in ${\rm SL}(2, \mathbb R)$ and we have mentioned that  $\mathbb Z*\mathbb Z$ can act on $\mathbb S^2$ minimally
and distally. In addition, there do exist isometric minimal actions of some lattices in ${\rm SL}(n, \mathbb R)$ on compact manifolds
with dimension $>2$ (see e.g. \cite{Mo} or \cite[p.59]{Br}). We should notice that there are nondistal minimal higher rank actions
on surfaces, such as the action of ${\rm SL}(3, \mathbb Z)$ on $\mathbb {RP}^2$ induced by the linear action on $\mathbb R^3$.

\section{Preliminaries}

In this section, we will recall some basic notions and introduce some results which will be used in the proof of the main theorem.

\subsection{Distal actions}

Let $X$ be a topological space and let ${\rm Homeo}(X)$ be
the homeomorphism group of $X$. Suppose $G$ is a topological group. A group
homomorphism $\phi: G\rightarrow {\rm Homeo}(X)$ is called a {\it continuous
action} of $G$ on $X$ if $(x, g)\mapsto \phi(g)(x)$ is continuous; we use the symbol $(X, G, \phi)$ to denote this action.
The action $\phi$ is said to be {\it faithful} if it is injective.  For brevity, we usually use $gx$ or $g(x)$ instead of $\phi(g)(x)$
and use $(X, G)$ instead of $(X, G, \phi)$ if no confusion occurs.

For $x\in X$,  the {\it orbit} of $x$  is the set $Gx\equiv\{gx:g\in
G\}$; $K\subset X$ is called {\it $G$ invariant} if $Gx\in K $ for every $x\in X$;
$(X, G, \phi)$ is called {\it minimal} if $Gx$ is dense in $X$ for every $x\in X$, which is equivalent
to that $G$ has no proper closed invariant set; is called {\it transitive} if $Gx=X$ for every $x\in X$.
If $K\subset X$ is $G$ invariant, then we naturally get a restriction action $\phi|K$ of $G$ on $X$;
if $K$ is closed and nonempty, and the restriction action $(K, G, \phi|K)$ is minimal,
then we call $K$ a {\it minimal set}  of $X$ or of the action. It is well known that $(X, G, \phi)$
always has a minimal set when $X$ is a compact metric space.

Suppose $(X, G, \phi)$ and  $(Y, G, \psi)$ are two actions. If there is a continuous surjection  $f:X\rightarrow Y$ such that
$f(\phi(g)x)=\psi(g)f(x)$ for every $g\in G$ and every $x\in X$, then we say $f$ is a {\it homomorphism} and $(Y, G, \psi)$
is a {\it factor} of $(X, G, \phi)$. If $Y$ is a single point, then we call $(Y, G, \psi)$  a {\it trivial factor} of  $(X, G, \phi)$.

Assume further that $X$ is a compact metric space with metric $d$. The action $(X, G, \phi)$ is called
{\it equicontinuous} if for every $\epsilon>0$ there is a $\delta>0$ such that $d(gx, gy)<\epsilon$ whenever $d(x, y)<\delta$;
is called {\it distal}, if for every $x\not=y\in X$, $\inf_{g\in G}d(gx, gy)>0$. Clearly, equicontinuity implies distality.

The following results can be found in \cite{Au}.

%\begin{thm}[\cite{Au}, p.38 \& p.69]\label{factor remain}
%A factor of an equicontinuous action is equicontinuous and a factor of a distal action is distal.
%\end{thm}

\begin{thm}[\cite{Au}, p.98]\label{homomorphism open}
Let $(X, G, \phi)$ and  $(Y, G, \psi)$ be distal minimal actions, and let $f:X\rightarrow Y$ be a homomorphism.
Then $f$ is open.
\end{thm}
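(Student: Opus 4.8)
The plan is to deduce openness of $f$ from the structure of the Ellis (enveloping) semigroups of $X$ and $Y$, using that distality turns these semigroups into \emph{groups}. Recall (see \cite{Au}) that $E(X)=E(X,G)$, the closure of the image of $G$ inside $X^X$ with the product topology, is a compact right-topological semigroup acting on $X$ by evaluation, that each right translation $\rho_q\colon p\mapsto pq$ on $E(X)$ is continuous, and that $(X,G)$ is distal precisely when $E(X)$ is a group. A factor of a distal minimal flow is distal and minimal (alternatively this follows from the surjection $\theta$ introduced below, since a group-quotient of a group is a group), so both $E(X)$ and $E(Y)$ are compact right-topological groups.

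Fix $x_0\in X$, put $y_0=f(x_0)$, and consider the evaluation maps $\pi_X\colon E(X)\to X,\ p\mapsto px_0$ and $\pi_Y\colon E(Y)\to Y,\ q\mapsto qy_0$, together with the map $\theta\colon E(X)\to E(Y)$ determined by $\theta(p)f(x)=f(px)$ for all $x\in X$. I would first record that $\theta$ is well defined (if $f(x)=f(x')$ then along any net $g_\alpha\to p$ in $X^X$ one has $g_\alpha f(x)=g_\alpha f(x')$, whence $f(px)=f(px')$), that it is a continuous semigroup homomorphism, and that it is surjective because $\theta(E(X))$ is compact and contains the dense subset $G$ of $E(Y)$. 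Likewise $\pi_X$ is continuous, and it is surjective because $E(X)x_0$ is compact, hence closed, and contains the dense orbit $Gx_0$; the same applies to $\pi_Y$. By construction $f\circ\pi_X=\pi_Y\circ\theta$. Being continuous surjections from compact spaces onto Hausdorff spaces, $\pi_X$, $\pi_Y$ and $\theta$ are closed, hence quotient maps.

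The crucial step is that $\pi_X$, $\pi_Y$ and $\theta$ are in fact \emph{open}. For $\pi_X$: if $U\subseteq E(X)$ is open and $S=\{p\in E(X):px_0=x_0\}$, one checks directly that $\pi_X^{-1}(\pi_X(U))=US=\bigcup_{s\in S}\rho_s(U)$; since $E(X)$ is a group, each $\rho_s$ has continuous inverse $\rho_{s^{-1}}$ and is therefore a homeomorphism of $E(X)$, so $US$ is open, and as $\pi_X$ is a quotient map $\pi_X(U)$ is open. The identical argument, now with $\ker\theta$ in place of $S$ (using that $\theta$ is a homomorphism and $E(X)$ a group to get $\theta^{-1}(\theta(U))=U\ker\theta=\bigcup_{n\in\ker\theta}\rho_n(U)$), shows $\theta$ is open; and the argument for $\pi_X$ applies verbatim to $\pi_Y$.

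Finally I would assemble the pieces: given an open set $V\subseteq X$, the set $\pi_X^{-1}(V)$ is open by continuity of $\pi_X$, hence $\theta(\pi_X^{-1}(V))$ is open by openness of $\theta$, hence $\pi_Y(\theta(\pi_X^{-1}(V)))$ is open by openness of $\pi_Y$; but this last set equals $f(\pi_X(\pi_X^{-1}(V)))=f(V)$ because $\pi_X$ is surjective. Hence $f$ is open. The point demanding care — and essentially the only place distality enters beyond making the enveloping semigroups into groups — is that $E(X)$ is \emph{not} a topological group: left translations and inversion may fail to be continuous, so the ``saturation of an open set by a subgroup is open'' argument must be run using only the right translations $\rho_s$, which are precisely the maps the Ellis semigroup structure guarantees to be homeomorphisms.
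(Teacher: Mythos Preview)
The paper does not supply a proof of this statement: Theorem~\ref{homomorphism open} is simply quoted from \cite[p.~98]{Au} and used as a black box, so there is no in-paper argument to compare your proposal against.

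That said, your Ellis--semigroup argument is correct and is essentially the classical route to this result. Distality forces $E(X)$ and $E(Y)$ to be groups; the evaluation maps $\pi_X,\pi_Y$ and the induced homomorphism $\theta$ are continuous surjections between compact Hausdorff spaces, hence quotient maps; and the saturation identities $\pi_X^{-1}(\pi_X(U))=US$, $\theta^{-1}(\theta(U))=U\ker\theta$ reduce openness to the fact that each right translation $\rho_s$ is a homeomorphism (continuous with continuous inverse $\rho_{s^{-1}}$). The commuting square $f\circ\pi_X=\pi_Y\circ\theta$ together with surjectivity of $\pi_X$ then yields $f(V)=\pi_Y\bigl(\theta(\pi_X^{-1}(V))\bigr)$, which is open. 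Your closing caveat is exactly the right one: $E(X)$ is only right-topological, so the saturation argument must be run via right translations, as you do. The only cosmetic point worth tightening is the sentence on well-definedness of $\theta$: you should also record explicitly that $\theta(p)\in E(Y)$ (not merely $Y^Y$), which follows since along any net $g_\alpha\to p$ in $E(X)$ one has $g_\alpha\to\theta(p)$ pointwise on $Y$.
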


\begin{thm}[\cite{Au}, p.104]\label{maximal fator}
Let $(X, G, \phi)$ be a distal minimal action. If $X$ is not a single point, then $(X, G, \phi)$ has a nontrivial equicontinuous factor.
\end{thm}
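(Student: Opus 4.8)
The plan is to argue by contradiction and funnel everything into a single rigidity statement about homomorphisms of $\Gamma$ into compact Lie groups. Suppose $\Gamma$ admits a distal minimal action on the closed surface $\mathcal S$. Since $\mathcal S$ has more than one point, Theorem~\ref{maximal fator} produces a nontrivial equicontinuous factor $(Y,\Gamma,\psi)$ together with a homomorphism $f:\mathcal S\to Y$. Because $\mathcal S$ is connected and $f$ is a continuous surjection, $Y$ is a compact connected metric space with more than one point, and $(Y,\Gamma)$ is again minimal (a factor of a minimal action is minimal) and equicontinuous. So it suffices to rule out nontrivial equicontinuous minimal actions of $\Gamma$.

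First I would record the standard structure of equicontinuous minimal systems. Since the action on $Y$ is equicontinuous, the closure $K$ of $\psi(\Gamma)$ inside $\mathrm{Homeo}(Y)$, taken in the topology of uniform convergence, is a compact topological group acting on $Y$; the induced $\rho:\Gamma\to K$ has dense image, and minimality forces the $K$-action to be transitive. Fixing a base point identifies $(Y,K)$ with $(K/H,K)$ for the closed stabilizer $H\le K$. Thus the problem is transported to the single homomorphism $\rho:\Gamma\to K$ with dense image, where $K/H$ is connected and has more than one point. Next I would show $K$ is not totally disconnected: if it were, then $K$ would be profinite, the sets $UH/H$ coming from open normal subgroups $U\trianglelefteq K$ would be clopen and would form a neighborhood basis of the base point in $K/H$, and by homogeneity $Y=K/H$ would be totally disconnected, contradicting connectedness. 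Hence the identity component $K^0$ is nontrivial. By the Peter--Weyl theorem there is a finite-dimensional continuous representation of $K$ that is nontrivial on $K^0$; its image $L$ is a compact Lie group whose identity component $L^0$ is a nontrivial connected compact Lie group, and composing with $\rho$ yields a homomorphism $\Gamma\to L$ with dense, hence infinite, image. Passing to the finite-index subgroup $\Gamma'=\{\gamma:\rho(\gamma)\in L^0\}$, itself a lattice in $\mathrm{SL}(n,\mathbb R)$, gives a homomorphism $\Gamma'\to L^0$ with dense image into a nontrivial connected compact Lie group.

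The heart of the matter, and the step where the hypothesis $n\ge 3$ is indispensable, is to contradict the existence of this last homomorphism. Here I would invoke Margulis superrigidity: a higher-rank lattice $\Gamma'$ in $\mathrm{SL}(n,\mathbb R)$ has finite image under any homomorphism into a compact Lie group, because a dense image in a positive-dimensional compact group is Zariski dense and bounded, hence by superrigidity extends to a continuous homomorphism $\mathrm{SL}(n,\mathbb R)\to L^0$; but the simple noncompact group $\mathrm{SL}(n,\mathbb R)$ admits no nontrivial homomorphism into a compact group, so the extension, and therefore $\rho|_{\Gamma'}$, is trivial, contradicting density in the nontrivial group $L^0$. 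This is the desired contradiction. I expect this rigidity input to be the main obstacle: it is precisely what fails for $n=2$, since free or surface lattices admit dense representations into $\mathrm{SO}(3)$, which is exactly why $\mathbb Z\ast\mathbb Z$ acts minimally and distally on $\mathbb S^2=\mathrm{SO}(3)/\mathrm{SO}(2)$. Any correct proof must therefore use the higher-rank hypothesis at this point.

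Finally, I would remark on an alternative, more surface-specific route. By Theorem~\ref{homomorphism open} the homomorphism $f$ is open, which bounds $\dim Y\le\dim\mathcal S=2$ and lets one classify the homogeneous factor $Y$ as a circle when $\dim Y=1$ and as one of $\mathbb S^2,\mathbb{RP}^2,T^2$ or the Klein bottle when $\dim Y=2$; one then appeals to rigidity of higher-rank actions on the circle and to finiteness of the abelianization of $\Gamma$ in the torus case, together with the same superrigidity fact in the $\mathrm{SO}(3)$ cases. Since this case analysis ultimately rests on the identical higher-rank rigidity phenomenon, I would prefer to present the uniform argument above, which never uses the surface hypothesis beyond connectedness.
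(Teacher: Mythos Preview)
Your proposal does not address the stated theorem. Theorem~\ref{maximal fator} is a general structural fact from Auslander's monograph: every nontrivial distal minimal system $(X,G,\phi)$ admits a nontrivial equicontinuous factor. It makes no reference to lattices, surfaces, or rigidity, and the paper does not prove it --- it is simply quoted from \cite{Au}. What you have actually written is a proof sketch for Theorem~\ref{main theorem}, the main result of the paper; indeed, in your second sentence you invoke Theorem~\ref{maximal fator} itself to produce the equicontinuous factor, so you are assuming the very statement you were asked to establish. A proof of Theorem~\ref{maximal fator} would have to go through the Furstenberg structure theory or the Ellis enveloping-semigroup machinery in \cite{Au}, and would not mention $\Gamma$ or $\mathcal S$ at all.

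If the intended target was really Theorem~\ref{main theorem}, then your argument is a genuine alternative to the paper's. The paper passes to a compact Lie quotient $H/H'$ acting transitively on $K/H'$, uses the open-map property of distal factor maps together with the surface-specific Theorem~\ref{upper dimension} to force $\dim(K/H')\le 2$, then Theorems~\ref{compact unique} and~\ref{dimension surface} to get $\dim(H/H')\le 3$, and finally contradicts the lower bound $\dim\ge n^2-1\ge 8$ of Theorem~\ref{lower dimension}. Your route instead extracts a nontrivial compact Lie quotient via Peter--Weyl and appeals to Margulis superrigidity directly; this bypasses the surface-specific dimension estimates and, as you observe, uses only connectedness of $\mathcal S$. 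But you must first clarify which theorem you are actually proving.
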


\begin{thm}[\cite{Au}, p.52]\label{compact group}
Let $(X, G, \phi)$ be  equicontinuous. Then the closure $\overline{\phi(G)}$ in ${\rm Homeo}(X)$ with respect to
the uniform convergence topology is a compact topological group.
\end{thm}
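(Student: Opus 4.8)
The plan is to realize $\phi(G)$ inside the space $C(X,X)$ of continuous self-maps of the compact metric space $(X,d)$, equipped with the sup-metric $\rho(f,h)=\sup_{x\in X}d(f(x),h(x))$ (which induces the uniform convergence topology), and to run the Arzel\`a--Ascoli theorem. The hypothesis of equicontinuity of $(X,G,\phi)$ says precisely that the family $\{\phi(g):g\in G\}\subseteq C(X,X)$ is uniformly equicontinuous; since $X$ is compact this family is automatically pointwise relatively compact, because each orbit $\{\phi(g)(x):g\in G\}$ lies in the compact space $X$. Hence Arzel\`a--Ascoli gives that the closure $\overline{\phi(G)}$ in $C(X,X)$ is compact. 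The first fact to record is that this closure is again equicontinuous: a uniform limit inherits the same $\epsilon$--$\delta$ modulus by a three-term estimate $d(f(x),f(y))\le d(f(x),\phi(g)(x))+d(\phi(g)(x),\phi(g)(y))+d(\phi(g)(y),f(y))$. I will use this repeatedly.

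The key point, and the step I expect to be the main obstacle, is to show that every element of $\overline{\phi(G)}$ is in fact a homeomorphism, so that the closure taken inside $\mathrm{Homeo}(X)$ coincides with the compact set just produced. The crucial structural input is that $\phi$ is a group homomorphism into $\mathrm{Homeo}(X)$, so that $\phi(g)^{-1}=\phi(g^{-1})$ and therefore the family of inverses $\{\phi(g)^{-1}:g\in G\}$ equals $\phi(G)$ itself and is equicontinuous. Given $f\in\overline{\phi(G)}$, I choose a net $\phi(g_\alpha)\to f$ uniformly; by compactness of $\overline{\phi(G)}$ the net $\phi(g_\alpha^{-1})$ has a subnet converging uniformly to some $h\in\overline{\phi(G)}$. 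I will then invoke the fact that composition $(u,v)\mapsto u\circ v$ is jointly continuous when the arguments range over an equicontinuous family; passing to the limit in the identities $\phi(g_\alpha)\circ\phi(g_\alpha^{-1})=\mathrm{id}=\phi(g_\alpha^{-1})\circ\phi(g_\alpha)$ along the subnet yields $f\circ h=h\circ f=\mathrm{id}$. Thus $f$ is a bijection with continuous inverse $h$, so $f\in\mathrm{Homeo}(X)$.

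It remains to verify that $K:=\overline{\phi(G)}$ is a topological group under composition. Joint continuity of composition $\mu\colon(u,v)\mapsto u\circ v$ on the equicontinuous set $K\times K$ is the estimate already used above. For inversion $\iota\colon u\mapsto u^{-1}$, if $u_\alpha\to u$ in $K$ then by compactness any subnet of $u_\alpha^{-1}$ has a further subnet converging to some $w\in K$, and joint continuity of $\mu$ forces $u\circ w=\mathrm{id}$, whence $w=u^{-1}$; since every subnet of $u_\alpha^{-1}$ subconverges to $u^{-1}$ in the compact Hausdorff space $K$, we conclude $u_\alpha^{-1}\to u^{-1}$, so $\iota$ is continuous. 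Finally, $\phi(G)$ is a subgroup of $\mathrm{Homeo}(X)$, being the image of a homomorphism, and is dense in $K$; since $\mu$ and $\iota$ are continuous and $K$ is closed in $C(X,X)$, we obtain $\mu(K\times K)\subseteq\overline{\mu(\phi(G)\times\phi(G))}=\overline{\phi(G)}=K$ and likewise $\iota(K)\subseteq\overline{\iota(\phi(G))}=K$. Thus $K$ is a subgroup of $\mathrm{Homeo}(X)$ on which composition and inversion are continuous, and it is compact, i.e.\ a compact topological group, as asserted.
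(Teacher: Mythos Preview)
The paper does not supply its own proof of this theorem; it is quoted as a known result from Auslander's book (\cite{Au}, p.~52) and is used as a black box in the proof of the main theorem. So there is no in-paper argument to compare against.

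Your argument is correct and is essentially the standard proof one finds in Auslander or in Ellis: Arzel\`a--Ascoli gives compactness of $\overline{\phi(G)}$ in $C(X,X)$; the crucial observation that $\phi(G)^{-1}=\phi(G)$ lets you extract a limit for the inverses and thereby show every limit map is a homeomorphism; and equicontinuity of the closure yields joint continuity of composition, from which continuity of inversion follows by the compactness/sub(sub)net trick. Two very minor remarks: since $(C(X,X),\rho)$ is metric you may work with sequences rather than nets throughout; and it is worth stating explicitly (you use it implicitly) that because every element of the $C(X,X)$-closure is a homeomorphism, the closure taken in $\mathrm{Homeo}(X)$ with the subspace (uniform convergence) topology coincides with the compact set you produced in $C(X,X)$.
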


\subsection{Compact transformation groups}

Let $(X, G, \phi)$ be a group action and $H$ be a closed subgroup of $G$. Then we use $X/H$ to denote the orbit space
under the $H$ action, which is endowed with the quotient space topology. We use $G/H$ to denote the coset space with the quotient topology,
which is also the orbit space obtained by the left translation action on $G$ by $H$. If $H$ is a normal closed subgroup of $G$, then
$G/H$ is a topological group.

The following theorems can be seen in \cite{MZ}. We only state them in some special cases which are enough for our uses.

\begin{thm}[\cite{MZ}, p.65]\label{homogeneous space}
Let $X$ be a compact metric space and let $(X, G)$ be an action of group $G$ on $X$. Suppose $G$ is compact. Then for every
$x\in X$, $G/G_x$ is homeomorphic to $Gx$, where $G_x=\{g\in G: gx=x\}$.
\end{thm}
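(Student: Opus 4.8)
The plan is to exhibit the natural orbit map as a continuous bijection from a compact space onto a Hausdorff space, and then invoke the standard point-set fact that any such map is automatically a homeomorphism. Concretely, I would define $\pi\colon G/G_x\to Gx$ by $\pi(gG_x)=gx$. First I would verify that $\pi$ is well defined: if $gG_x=hG_x$, then $g^{-1}h\in G_x$, so $g^{-1}hx=x$ and hence $hx=gx$, which shows the value $\pi(gG_x)$ is independent of the chosen coset representative. Running the same computation backwards gives injectivity, since $gx=hx$ forces $g^{-1}h\in G_x$ and therefore $gG_x=hG_x$; surjectivity is immediate from the definition of the orbit $Gx=\{gx:g\in G\}$.

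Next I would address the continuity of $\pi$. Let $q\colon G\to G/G_x$ be the quotient map, and let $\alpha\colon G\to Gx$ be the orbit map $\alpha(g)=gx$, which is continuous because the action $(x,g)\mapsto\phi(g)(x)$ is continuous (fix the first coordinate at $x$). By construction $\alpha=\pi\circ q$, and since $G/G_x$ carries the quotient topology, the universal property of quotient maps yields that $\pi$ is continuous.

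The topological heart of the argument is then the continuity of the inverse. Because $G$ is compact and $q$ is a continuous surjection, $G/G_x=q(G)$ is compact. On the other hand, $Gx$ is a subset of the metric space $X$, hence Hausdorff in the subspace topology. A continuous bijection from a compact space onto a Hausdorff space is a closed map: every closed subset of $G/G_x$ is compact, its image under $\pi$ is compact, and compact subsets of a Hausdorff space are closed; a closed continuous bijection necessarily has continuous inverse. Consequently $\pi$ is a homeomorphism of $G/G_x$ onto $Gx$, which is exactly the assertion.

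The only genuinely delicate point is the continuity of $\pi^{-1}$. For a general, non-compact acting group the orbit map need not be a homeomorphism onto its image, and it is precisely the compactness of $G$ that makes $G/G_x$ compact and thereby drives the compact-to-Hausdorff closedness argument. Everything else is a routine check of well-definedness, bijectivity, and continuity of the canonically defined map $gG_x\mapsto gx$.
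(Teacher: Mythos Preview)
Your argument is correct and is the standard proof of this classical fact: the orbit map $gG_x\mapsto gx$ is a well-defined continuous bijection, and compactness of $G$ together with Hausdorffness of $X$ forces it to be a homeomorphism. The paper itself does not supply a proof but simply quotes the result from Montgomery--Zippin, so there is nothing to compare; your write-up is exactly the argument one would expect to find behind the citation.
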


\begin{thm}[\cite{MZ}, p.99]\label{small group} Let $G$ be a compact group and let $U$ be an open neighborhood of the identity $e$.
Then $U$ contains a normal subgroup $H$ of $G$ such that $G/H$ is isomorphic to a Lie group.
\end{thm}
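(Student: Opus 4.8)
The statement is essentially the Peter--Weyl theorem combined with E.\ Cartan's closed subgroup theorem, and the plan is to deduce it from the fact that finite‑dimensional representations separate points of a compact group. First I would recall Peter--Weyl: on a compact group $G$ the continuous finite‑dimensional unitary representations separate points, so for each $g\in G$ with $g\neq e$ there is a continuous homomorphism $\rho_g\colon G\to U(n_g)$ into a unitary group with $\rho_g(g)\neq I$. (This is where compactness is indispensable; it is proved by decomposing the regular representation on $L^2(G)$ — built from Haar measure — into finite‑dimensional blocks using that the convolution operators are compact and self‑adjoint. I would regard establishing this as the real content and simply cite it.) For any continuous homomorphism $\rho\colon G\to U(n)$, the kernel $\ker\rho$ is a closed normal subgroup, and $\rho$ factors through an injective continuous homomorphism $G/\ker\rho\to U(n)$. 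Since $G/\ker\rho$ is compact and $U(n)$ is Hausdorff, this map is a topological embedding onto a closed subgroup of $U(n)$, so $G/\ker\rho$ is a compact Lie group by Cartan's closed subgroup theorem.

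Now I would fix the open neighborhood $U$ of $e$ and use compactness of $G\setminus U$. For each $g\in G\setminus U$ choose $\rho_g$ as above; then $V_g=\{x\in G:\rho_g(x)\neq I\}$ is open and contains $g$, so $\{V_g:g\in G\setminus U\}$ is an open cover of the compact set $G\setminus U$, and I extract a finite subcover $V_{g_1},\dots,V_{g_k}$. Setting
\[
H=\bigcap_{i=1}^{k}\ker\rho_{g_i},
\]
$H$ is a closed normal subgroup of $G$, and $H\subseteq U$: if $x\notin U$ then $x\in V_{g_i}$ for some $i$, hence $\rho_{g_i}(x)\neq I$ and $x\notin\ker\rho_{g_i}\supseteq H$.

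Finally I would check that $G/H$ is a Lie group. The map $G\to\prod_{i=1}^{k}U(n_{g_i})$, $x\mapsto(\rho_{g_1}(x),\dots,\rho_{g_k}(x))$, is a continuous homomorphism with kernel exactly $H$, so it induces a continuous injective homomorphism $G/H\to\prod_{i=1}^{k}U(n_{g_i})$; as before, $G/H$ is compact and the target is a Hausdorff Lie group, so this is a topological embedding onto a closed subgroup, whence $G/H$ is isomorphic to a compact Lie group. The main obstacle is the input Peter--Weyl theorem itself; the only other delicate point is the repeated use of the fact that a continuous bijection from a compact space onto a Hausdorff space is a homeomorphism, which is what lets the abstract quotient $G/H$ be identified with its concrete realization as a matrix group.
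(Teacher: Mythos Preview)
Your argument is correct: Peter--Weyl supplies enough finite-dimensional unitary representations to separate points, compactness of $G\setminus U$ reduces to finitely many of them, and the intersection $H$ of their kernels is then a closed normal subgroup contained in $U$ with $G/H$ embedding as a closed (hence Lie) subgroup of a finite product of unitary groups. There is nothing to compare against, however: the paper does not give its own proof of this theorem but simply quotes it from Montgomery--Zippin as a black box for use in the proof of the main result. Your Peter--Weyl route is in fact the standard derivation of this statement for compact groups, and is essentially how Montgomery--Zippin themselves obtain it.
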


\begin{thm}[\cite{MZ}, p.61]\label{induce action} Let $X$ be a compact metric space and let $(X, G)$ be an action of group $G$ on $X$.
 Suppose $G$ is compact and $H$ is a closed normal subgroup of  $G$. Then  $G/H$ can
 act on $X/H$ by letting $gH.H(x)=H(gx)$ for $gH\in G/H$ and $H(x)\in X/H$.
\end{thm}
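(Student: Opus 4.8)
The plan is to check, in turn, the three things needed to make sense of the asserted action of $G/H$ on $X/H$: that the formula $gH\cdot H(x)=H(gx)$ does not depend on the chosen representatives $g$ and $x$; that it defines a homomorphism from $G/H$ into the self-homeomorphisms of $X/H$; and that the resulting map $G/H\times X/H\to X/H$ is jointly continuous. I will write $\pi\colon X\to X/H$, $\pi(x)=H(x)$, for the orbit map and $q\colon G\to G/H$, $q(g)=gH$, for the coset map. A fact I would establish first and use repeatedly is that both maps are \emph{open}: for open $U\subseteq X$ one has $\pi^{-1}(\pi(U))=\bigcup_{h\in H}hU$, which is open because each $h$ acts as a homeomorphism, so $\pi(U)$ is open; the analogous computation with right translations shows $q$ is open.

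Well-definedness is exactly where the normality of $H$ enters, and I would treat it as follows. Suppose $gH=g'H$ and $H(x)=H(x')$, so $g'=gh$ and $x'=h'x$ for some $h,h'\in H$. Then $g'x'=g(hh')x$, and since $H$ is normal we may rewrite $g(hh')=\bigl(g(hh')g^{-1}\bigr)g$ with $g(hh')g^{-1}\in H$; hence $g'x'\in H(gx)$, i.e. $H(g'x')=H(gx)$. Thus the formula descends to a well-defined map $\bar\phi\colon G/H\times X/H\to X/H$. The group-action axioms are then immediate: $eH\cdot H(x)=H(x)$ and $(g_1H)(g_2H)\cdot H(x)=H(g_1g_2x)=g_1H\cdot\bigl(g_2H\cdot H(x)\bigr)$, so $g_1^{-1}H$ furnishes a set-theoretic inverse for each $g_1H$.

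For continuity I would introduce the continuous map $\Phi\colon G\times X\to X/H$, $\Phi(g,x)=\pi(gx)=H(gx)$, the composite of the given action with $\pi$. By well-definedness $\Phi$ factors as $\Phi=\bar\phi\circ(q\times\pi)$. The key point is that $q\times\pi\colon G\times X\to G/H\times X/H$ is a quotient map: being a product of open continuous surjections it is itself open (the image of a basic box $U\times V$ is the open set $q(U)\times\pi(V)$) and surjective, and an open continuous surjection is a quotient map. The universal property of quotient maps then promotes the set-theoretic factorization $\bar\phi$ to a continuous map, and each $gH$ acts as a homeomorphism with inverse induced by $g^{-1}H$.

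The one genuinely delicate point is this joint continuity, since a product of quotient maps need not be a quotient map in general; the resolution is precisely the openness of the orbit and coset maps, which I recorded at the outset. Compactness of $G$ plays no role in the algebra or in the continuity argument; it serves only to ensure that $X/H$ and $G/H$ are again compact (and metrizable), so that the conclusion is being asserted in the intended category of compact transformation groups.
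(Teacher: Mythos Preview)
Your argument is correct in every particular: well-definedness via normality, the group-action axioms, and---the only point requiring care---joint continuity via the observation that $q\times\pi$ is an open surjection and hence a quotient map. Your closing remark that compactness of $G$ is inessential to the argument itself is also accurate.

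There is nothing to compare against, however: the paper does not supply a proof of this statement at all. It is simply quoted from Montgomery--Zippin \cite{MZ}, p.~61, as one of several preliminary facts assembled in Section~2 for later use. So your write-up is not an alternative to the paper's proof but a self-contained verification of a cited result.
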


\subsection{Higher rank lattices}

A subgroup $\Gamma$ of a Lie group $G$ is called a lattice in $G$ if $\Gamma$ is discrete and $G/\Gamma$ has finite volume. If
$\Gamma$ is a lattice of $G$ and $G/\Gamma$ is compact, then $\Gamma$ is called a {\it cocompact} lattice of $G$. It is well known
that  ${\rm SL}(n, \mathbb R)$ with $n\geq 3$ always has cocompact and non-cocompact lattices;  ${\rm SL}(n, \mathbb Z)$ is
a non-cocompact lattice of  ${\rm SL}(n, \mathbb R)$. One may consult \cite{Wi1} for the examples of cocompact lattices in  ${\rm SL}(n, \mathbb R)$.

%The following theorem is due to Margulis and Kazhdan (see e.g. \cite[Theorem 8.1.2]{Zi}).

%\begin{thm}\label{finiteness theorem} Let $\Gamma$ be a lattice in ${\rm SL}(n, \mathbb R)$ with $n\geq 3$, and let $N$ be a normal subgroup of $\Gamma$.
%Then either $N$ is finite or $G/N$ is finite.
%\end{thm}

%The following theorem is known as Selberg's Lemma (see e.g. \cite{Wi1}).

%\begin{thm}\label{torsion free} Let $\Gamma$ be a lattice in ${\rm SL}(n, \mathbb R)$ with $n\geq 3$. Then $\Gamma$ has a torsion free subgroup
%of finite index.
%\end{thm}

The following theorem can be deduced from \cite[Theorem VII.6.5]{Ma} and \cite[Corollary 16.4.2]{Wi1} (see also \cite[Theorem 12.4]{Br}).

\begin{thm}\label{lower dimension} Let $\Gamma$ be a lattice in ${\rm SL}(n, \mathbb R)$ with $n\geq 3$ and let $H$ be a compact Lie group.
Suppose $\phi:\Gamma\rightarrow H$ is a group homomorphism. If $\phi(\Gamma)$ is infinite, then ${\rm dim}(H)\geq n^2-1$.
\end{thm}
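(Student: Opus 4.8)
The plan is to feed the homomorphism $\phi$ into Margulis superrigidity and read off the dimension bound from the simplicity of $\mathfrak{sl}(n,\mathbb C)$, using the normal subgroup theorem to clear away the degenerate cases. First I would make three reductions. Replacing $H$ by the closure $\overline{\phi(\Gamma)}$, I may assume $\phi(\Gamma)$ is dense in $H$; as $\phi(\Gamma)$ is infinite and a compact Lie group of dimension $0$ is finite, this already gives $\dim H\geq 1$. Passing to the identity component $H^{0}$ and to the finite-index subgroup $\Gamma_{0}=\phi^{-1}(H^{0})$, which is again a lattice in ${\rm SL}(n,\mathbb R)$ with $\phi(\Gamma_{0})$ dense in $H^{0}$, I may assume $H$ is connected. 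Finally I would discard toral factors: projecting $H$ onto the torus $H/[H,H]$ would exhibit an infinite abelian quotient of $\Gamma$, which is impossible because a higher rank lattice has finite abelianization (a consequence of the normal subgroup theorem, \cite{Ma}). Thus I may assume $H$ is a connected semisimple compact Lie group in which $\phi(\Gamma)$ is dense, and hence Zariski dense when $H$ is viewed as a real algebraic group (a Zariski-closed subgroup is closed in the Hausdorff topology, so the Zariski closure of $\phi(\Gamma)$ already contains its Hausdorff closure $H$).

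Next I would apply the two cited rigidity theorems. The kernel of $\phi$ is normal in $\Gamma$ and is not of finite index, since $\phi(\Gamma)$ is infinite; by Margulis's normal subgroup theorem it is therefore finite, so $\phi$ is virtually faithful. Now I invoke superrigidity, \cite[Theorem VII.6.5]{Ma} in the form of \cite[Corollary 16.4.2]{Wi1}, applied to the Zariski-dense homomorphism $\phi\colon\Gamma\to H\subseteq\mathbf H(\mathbb C)$, where $\mathbf H$ denotes the complexification of $H$. The outcome is a holomorphic homomorphism $\hat\rho\colon\mathrm{SL}(n,\mathbb C)\to\mathbf H(\mathbb C)$ of the complexified groups that extends $\phi$ on $\Gamma$. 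The point of passing to the complexification is that $\phi$ cannot extend to $G={\rm SL}(n,\mathbb R)$ itself, a connected simple noncompact group having no nontrivial homomorphism into a compact group; the complex group $\mathrm{SL}(n,\mathbb C)$ carries both $G$ and the compact dual, and it is the latter that matches the compact target.

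To finish I would restrict $\hat\rho$ to the compact real form $G_{u}=\mathrm{SU}(n)$ of $\mathrm{SL}(n,\mathbb C)$. Its image $\hat\rho(G_{u})$ is compact, hence lies in a maximal compact subgroup of $\mathbf H(\mathbb C)$; since the compact real form $H$ is such a maximal compact subgroup and all of them are conjugate, after conjugating $\hat\rho$ I obtain a continuous homomorphism $\Psi\colon\mathrm{SU}(n)\to H$. Because $\mathrm{SL}(n,\mathbb C)$ is simple and $\hat\rho$ is nontrivial (its restriction $\phi$ has infinite image), $\hat\rho$ has finite central kernel, so its differential embeds $\mathfrak{sl}(n,\mathbb C)$ into $\mathfrak h_{\mathbb C}$; restricting to the real form $\mathfrak{su}(n)=\mathrm{Lie}(G_{u})$ gives an injective Lie algebra homomorphism $d\Psi\colon\mathfrak{su}(n)\hookrightarrow\mathfrak h$. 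Comparing real dimensions yields
\[
\dim H=\dim_{\mathbb R}\mathfrak h\ \geq\ \dim_{\mathbb R}\mathfrak{su}(n)=n^{2}-1,
\]
which is the assertion. I expect the genuine obstacle to be the superrigidity step: one must invoke exactly the form of Margulis's theorem that applies to a compact target (where no extension to $G$ exists) and correctly produce the extension to the complexification, so that the compact dual $\mathrm{SU}(n)$---and not the much smaller maximal compact $\mathrm{SO}(n)$ of $G$---governs the bound. Once that extension is in hand, the reductions and the dimension count are routine.
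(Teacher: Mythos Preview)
The paper does not supply a proof of this theorem; it merely records that the statement ``can be deduced from \cite[Theorem VII.6.5]{Ma} and \cite[Corollary 16.4.2]{Wi1}'' and moves on. Your proposal is a correct and careful elaboration of exactly that deduction: reduce to a connected semisimple compact target via the finite-abelianization property, then invoke Margulis superrigidity in its compact-target form to produce an extension to the complexification $\mathrm{SL}(n,\mathbb C)$, and finally restrict to the compact real form $\mathrm{SU}(n)$ to read off $\dim H\geq\dim\mathrm{SU}(n)=n^{2}-1$. You have also identified the one genuinely delicate point---that the extension must go through $\mathrm{SL}(n,\mathbb C)$ rather than $\mathrm{SL}(n,\mathbb R)$, so that $\mathrm{SU}(n)$ and not $\mathrm{SO}(n)$ governs the bound---which is precisely the content packaged in the cited corollary of Witte-Morris. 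There is nothing to correct; your argument is the intended one, written out in full.
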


\subsection{Open maps on $\mathcal S$}

There are several different ways to define the dimension of a topological space. One may consult \cite{En} for
these definitions. It is well known that when $X$ is a compact metric space, then all these dimensions of $X$
coincide. In general, if $f:X\rightarrow Y$ is a continuous surjection between compact metric spaces $X$ and $Y$,
then the dimension ${\rm dim}(Y)$ can be larger than ${\rm dim}(X)$, even if $f$ is open (see \cite{Wa}).
However, the situation is completely different if $X$ is a surface.

The following theorem is implied by the work in the papers \cite{Mo}, \cite{RS}, \cite{St}, and \cite{Wh}.

\begin{thm}\label{upper dimension}
Let $\mathcal S$ be a closed surface and $M$ be a  manifold. If there is an open surjection $f:\mathcal S\rightarrow M$,
then ${\rm dim}(M)\leq 2$.
\end{thm}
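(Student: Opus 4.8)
The plan is to reduce the statement to the classical theory of open (\emph{interior}) mappings of surfaces, for which the references \cite{Mo}, \cite{RS}, \cite{St}, \cite{Wh} supply the structural input. First I would record two elementary reductions. Since $\mathcal S$ is compact and $M$ is Hausdorff, the surjection $f$ is automatically closed; being simultaneously open and closed, $f$ is a quotient map whose fibre decomposition is both upper and lower semicontinuous. As $\mathcal S$ is connected, so is $M=f(\mathcal S)$; if $M$ is a single point there is nothing to prove, so assume $\dim M\ge 1$. The key local observation is that no fibre $f^{-1}(y)$ can have nonempty interior: if an open set $W\subseteq\mathcal S$ were contained in a fibre, then $f(W)$ would be a single point which, by openness of $f$, would be open in $M$, impossible for a connected manifold of positive dimension. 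Hence every fibre is a nowhere dense closed subset of the $2$-manifold $\mathcal S$, and therefore $\dim f^{-1}(y)\le 1$ for every $y$.

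Next I would pass to the monotone--light factorization $f=l\circ m$, where $m\colon\mathcal S\to Z$ is monotone (its fibres are the connected components of the fibres of $f$) and $l\colon Z\to M$ is light (has totally disconnected fibres). Because $f$ is both open and closed, the factors $m$ and $l$ inherit these properties. By the previous paragraph the fibres of $m$ are subcontinua of $\mathcal S$ of dimension at most $1$, so $m$ collapses a continuous decomposition of the surface into continua of dimension $\le 1$; the decomposition theory of surfaces (Moore-type theorems, cf.\ \cite{Mo}, \cite{Wh}) then shows that the quotient $Z$ satisfies $\dim Z\le 2$.

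It remains to control the light open map $l\colon Z\to M$. Here I would invoke Sto\"ilow's structure theorem for interior transformations together with Whyburn's work on open mappings of two-dimensional spaces (\cite{St}, \cite{Wh}, \cite{RS}): a light open map on a space arising as such a monotone quotient of a surface is, near each point, topologically a branched covering, and in particular cannot raise dimension. Consequently $\dim M\le\dim Z\le 2$, which is the assertion.

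The main obstacle is precisely this last step, and it is where the hypothesis that the domain is a surface is indispensable. For general compact metric domains open maps \emph{do} raise dimension --- Kolmogorov's and Walsh's examples produce open (hence, over a compact domain, open-and-closed) maps with small fibres whose images have strictly larger dimension --- so no purely dimension-theoretic argument can succeed: the Hurewicz inequality for closed maps only yields the \emph{lower} bound $\dim M\ge 2-\sup_y\dim f^{-1}(y)$. What rescues the surface case is the rigidity of light open maps in low dimensions supplied by the Sto\"ilow--Whyburn theory, combined with the fact that a monotone image of a surface remains a well-behaved $2$-dimensional space to which that theory applies; assembling these classical ingredients is the substance of the proof.
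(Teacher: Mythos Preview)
The paper does not supply its own proof of this theorem; it simply records that the result ``is implied by the work in'' \cite{Mo}, \cite{RS}, \cite{St}, and \cite{Wh}. Your outline is exactly the way these four references assemble --- monotone--light factorization, Moore/Roberts--Steenrod to control the monotone quotient of a surface, and Sto\"ilow--Whyburn to prevent the light open factor from raising dimension --- so your approach coincides with what the paper intends and is correct as a sketch.
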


\subsection{Dimension of compact subgroups of ${\rm Homeo}(\mathcal S)$} It is well known that if $M$ is a
connected compact Riemannian manifold of dimension $n$ and $I(M)$ is the isometric transformation group of $M$,
then $I(M)$ is a compact Lie group of dimensional at most $n(n+1)/2$ (see e.g. \cite{Ko}). In general, for a compact group
$G$ of ${\rm Homeo}(M)$, it no longer preserves the Riemannian metric on $M$. However, $G$ can still
preserve a compatible metric on $M$. In fact, let $\mu$ be the Haar measure of $G$ and $\rho$ be any compatible
metric on $M$. Then the metric $d$ on $M$ defined by $d(x,y)=\int_{g\in G} \rho(gx,gy)d\mu(g)$ 
is $G$ invariant.

The following theorem is \cite[Proposition 4.1]{Gh}.

\begin{thm}\label{compact unique}
Up to conjugacy, the rotation group ${\rm SO}(2, \mathbb R)$ is the only maximal compact subgroup of ${\rm Homeo}_+(\mathbb S^1)$.
\end{thm}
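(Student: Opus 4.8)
The plan is to establish the stronger statement that \emph{every} compact subgroup $G$ of ${\rm Homeo}_+(\mathbb S^1)$ is conjugate to a closed subgroup of ${\rm SO}(2,\mathbb R)$, and then to read off the maximality and uniqueness-up-to-conjugacy assertion from the elementary structure of closed subgroups of $\mathbb S^1$. Throughout I identify ${\rm SO}(2,\mathbb R)$ with the group of rigid rotations of $\mathbb S^1=\mathbb R/\mathbb Z$ and write $\lambda$ for normalized Lebesgue measure on $\mathbb S^1$.

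First I would build a $G$-invariant measure by averaging. Since $G$ is a compact topological group in the uniform topology, it carries a Haar probability measure $\mu$, and the evaluation map $G\times\mathbb S^1\to\mathbb S^1$ is jointly continuous; hence for each continuous $f$ the function $g\mapsto\int f(gx)\,d\lambda(x)$ is continuous, and by the Riesz representation theorem this defines a Borel probability measure $\nu=\int_G g_*\lambda\,d\mu(g)$. Left-invariance of $\mu$ yields $h_*\nu=\nu$ for all $h\in G$. Moreover $\nu$ has full support and no atoms: for a nonempty open $U$ one has $\nu(U)=\int_G\lambda(g^{-1}U)\,d\mu(g)>0$ because each $g^{-1}U$ is nonempty open, while $\nu(\{x\})=\int_G\lambda(\{g^{-1}x\})\,d\mu(g)=0$.

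Next I would straighten $\nu$ to $\lambda$. Fixing a basepoint and using that $\nu$ is atomless with full support, the cumulative distribution $x\mapsto\nu([0,x))$ descends to an orientation-preserving homeomorphism $h$ of $\mathbb S^1$ with $h_*\nu=\lambda$. Then for every $g\in G$ the conjugate $hgh^{-1}$ is orientation-preserving and satisfies $(hgh^{-1})_*\lambda=h_*g_*\nu=h_*\nu=\lambda$, so it preserves $\lambda$. Finally, an orientation-preserving homeomorphism preserving $\lambda$ must be a rotation: lifting to an increasing $F\colon\mathbb R\to\mathbb R$ with $F(x+1)=F(x)+1$, measure preservation forces $F(b)-F(a)=b-a$ for all $a<b$, so $F$ is a translation. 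Hence $hGh^{-1}\subseteq{\rm SO}(2,\mathbb R)$.

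To conclude I would argue as follows. Since ${\rm SO}(2,\mathbb R)$ is compact and every compact subgroup conjugates into it, any maximal compact subgroup $G$ admits $h$ with $hGh^{-1}\subseteq{\rm SO}(2,\mathbb R)$; as conjugation preserves maximality, $hGh^{-1}$ is maximal compact inside the compact group ${\rm SO}(2,\mathbb R)$, whence $hGh^{-1}={\rm SO}(2,\mathbb R)$ and $G$ is conjugate to ${\rm SO}(2,\mathbb R)$. Conversely ${\rm SO}(2,\mathbb R)$ is itself maximal compact: if ${\rm SO}(2,\mathbb R)\subseteq K$ with $K$ compact, conjugating $K$ into ${\rm SO}(2,\mathbb R)$ produces a closed subgroup of $\mathbb S^1$ isomorphic to $\mathbb S^1$, which must be all of $\mathbb S^1$ since the proper closed subgroups of $\mathbb S^1$ are finite, and this forces $K={\rm SO}(2,\mathbb R)$. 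The only steps requiring care are the joint continuity needed to define $\nu$ and the check that the cumulative distribution of an atomless, fully supported measure is a genuine homeomorphism; both are routine, so I expect the measure-averaging step to carry the real content of the argument.
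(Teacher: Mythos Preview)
Your argument is correct and is in fact the standard proof of this result. Note, however, that the paper does not supply its own proof of this theorem: it simply quotes the statement as \cite[Proposition 4.1]{Gh} and uses it as a black box. What you have written is essentially Ghys's argument---average Lebesgue measure over the compact group via Haar measure to obtain a $G$-invariant atomless fully supported probability measure, straighten it to Lebesgue by its distribution function, and observe that orientation-preserving Lebesgue-preserving circle homeomorphisms are rotations---so there is no genuine methodological difference to compare. One small point: in your final paragraph, after obtaining $h\,{\rm SO}(2,\mathbb R)\,h^{-1}={\rm SO}(2,\mathbb R)$ and $hKh^{-1}={\rm SO}(2,\mathbb R)$, the conclusion $K={\rm SO}(2,\mathbb R)$ uses that $h$ normalizes ${\rm SO}(2,\mathbb R)$; you have this implicitly, but it is worth making the sandwich $ {\rm SO}(2,\mathbb R)=h\,{\rm SO}(2,\mathbb R)\,h^{-1}\subseteq hKh^{-1}\subseteq{\rm SO}(2,\mathbb R)$ explicit.
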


The following theorem is well known in the theory of topology (see e.g. \cite[Corollary 8.17]{Nad}).
\begin{thm}\label{peano}
Any continuous Hausdorff image of a locally connected continuum is locally connected.
\end{thm}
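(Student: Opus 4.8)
The plan is to prove the purely topological form of the statement, using the characterization that a space is locally connected if and only if every connected component of every open subset of it is open. Throughout, let $X$ denote the locally connected continuum and $f:X\rightarrow Y$ the given continuous surjection onto the Hausdorff space $Y$. Note first that $Y=f(X)$ is compact and connected, so it is itself a continuum; the content is to verify local connectedness, and I will do so by checking the component criterion for $Y$.

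The first step is to observe that $f$ is a quotient map. Since $X$ is compact and $f$ continuous, $Y$ is compact; as $Y$ is Hausdorff, every compact subset of $Y$ is closed, so $f$ carries closed (hence compact) subsets of $X$ to closed subsets of $Y$. Thus $f$ is a closed map, and a continuous closed surjection is a quotient map: if $f^{-1}(V)$ is open, then $f^{-1}(Y\setminus V)$ is closed, its image $Y\setminus V$ is closed by surjectivity, and so $V$ is open. Hence a set $V\subseteq Y$ is open if and only if $f^{-1}(V)$ is open in $X$.

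The main step is the following claim. Let $U\subseteq Y$ be open and let $C$ be a connected component of $U$; I assert that $f^{-1}(C)$ is open in $X$. Indeed, $f^{-1}(U)$ is open by continuity, and since $X$ is locally connected, each connected component $D$ of $f^{-1}(U)$ is open. For any such $D$, the set $f(D)$ is connected and contained in $U$, hence lies in a single component of $U$; therefore $f(D)\subseteq C$ or $f(D)\cap C=\emptyset$. It follows that $f^{-1}(C)$ is precisely the union of those components $D$ of $f^{-1}(U)$ satisfying $f(D)\subseteq C$, and being a union of open sets it is open. By the quotient-map property from the previous step, $C$ is open in $Y$. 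As $U$ and $C$ were arbitrary, every component of every open subset of $Y$ is open, so $Y$ is locally connected.

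I do not anticipate a serious obstacle; the single delicate point is the reduction to the quotient-map property, which is exactly where both the compactness of $X$ and the Hausdorff hypothesis on $Y$ are needed, since together they force $f$ to be closed. It is worth remarking that connectedness of $X$ enters only to guarantee that $Y$ is a continuum, and that no metric is used anywhere, so the argument applies verbatim in the general compact Hausdorff continuum setting.
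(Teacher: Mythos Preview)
Your proof is correct and is essentially the standard argument for this classical fact. Note, however, that the paper does not actually supply a proof of this theorem: it simply records it as well known and cites Nadler's \emph{Continuum Theory} (Corollary~8.17). So there is no ``paper's own proof'' to compare against; what you have written is precisely the kind of self-contained verification the citation is meant to stand in for, relying on (i) the closed-map lemma for continuous maps from compact spaces to Hausdorff spaces, (ii) the quotient-map consequence, and (iii) the component characterization of local connectedness.
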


\begin{thm}\label{dimension surface}
Let $G$ be a compact Lie group acting faithfully and transitively on a closed surface $\mathcal S$. Then ${\rm dim}(G)\leq 3$.
\end{thm}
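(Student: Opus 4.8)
The plan is to upgrade the purely topological hypothesis to a smooth one and then apply the classical bound on the dimension of the isometry group of a compact Riemannian manifold recalled at the start of this subsection. Fix $x\in\mathcal S$ and let $G_x=\{g\in G:gx=x\}$, a closed subgroup of the Lie group $G$. By Theorem~\ref{homogeneous space} the map $gG_x\mapsto gx$ is a $G$-equivariant homeomorphism of the coset space $M:=G/G_x$ onto $Gx=\mathcal S$. Being the quotient of a Lie group by a closed subgroup, $M$ is a smooth manifold on which $G$ acts smoothly and transitively; moreover $M$ is connected and compact and $\dim M=\dim\mathcal S=2$ because $M$ is homeomorphic to $\mathcal S$, and the $G$-action on $M$ is faithful since it is conjugate, via the above homeomorphism, to the faithful action on $\mathcal S$.

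Next I average a Riemannian metric. Pick any Riemannian metric $g_0$ on $M$ and set $g=\int_{G}a^{*}g_0\,d\mu(a)$, where $\mu$ is the Haar measure of $G$ and each $a\in G$ is regarded as a diffeomorphism of $M$; this integral is well defined because $G$ acts smoothly on the compact manifold $M$, and $g$ is a $G$-invariant Riemannian metric. Hence $G$ acts on $(M,g)$ by isometries, which gives a continuous homomorphism $\iota\colon G\to I(M,g)$ into the isometry group, and $\iota$ is injective because the $G$-action on $M$ is faithful. Since $G$ is compact, $\iota(G)$ is a compact, hence closed, subgroup of the Lie group $I(M,g)$, so it is an (embedded) Lie subgroup and $\iota$ is a homeomorphism of $G$ onto it. Using that $I(M,g)$ is a compact Lie group of dimension at most $\tfrac{1}{2}\dim M(\dim M+1)=3$, we conclude $\dim G=\dim\iota(G)\le\dim I(M,g)\le 3$, which is the assertion.

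The single point that requires care is that the $G$-invariant compatible metric supplied by the averaging recipe stated just before Theorem~\ref{compact unique} is merely a metric inducing the topology of $\mathcal S$, not a Riemannian metric, so the $\tfrac{1}{2}n(n+1)$ bound cannot be invoked for it directly; producing a genuine $G$-invariant Riemannian metric is the crux. This is circumvented by transferring the problem to the coset manifold $G/G_x$, which carries a canonical smooth structure and therefore does admit an honest $G$-invariant Riemannian metric, and the transfer is harmless since dimension and connectedness are preserved by the homeomorphism $\mathcal S\cong G/G_x$. (Alternatively one could stay on $\mathcal S$ itself by appealing to uniqueness of the smooth structure on a surface, but this is not needed.) Everything else is routine.
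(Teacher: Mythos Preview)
Your argument is correct and takes a genuinely different route from the paper. The paper works purely in the topological category: it writes $\dim G=\dim G_p+2$ and then bounds $\dim G_p\le 1$ by producing, via Bonatti--Kolev type arguments, an invariant disk around $p$ whose boundary circle is acted on faithfully by the identity component $F$ of $G_p$; Theorem~\ref{compact unique} then forces $\dim F\le 1$. Your approach instead passes to the homogeneous space $M=G/G_x$, which \emph{automatically} carries a smooth structure and a smooth $G$-action, averages a Riemannian metric there, and invokes the classical bound $\dim I(M)\le \tfrac{1}{2}n(n+1)=3$ recalled at the beginning of the subsection. The key observation, which you isolate clearly, is that the averaging recipe the paper states only yields a topological metric on $\mathcal S$, whereas on $G/G_x$ one can average genuine Riemannian metrics because the action is smooth by construction. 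Your route is shorter and avoids the delicate construction of invariant disks and the appeal to Theorem~\ref{compact unique}; the paper's route, on the other hand, stays closer to the topological setting and yields the slightly sharper information that the stabilizer's identity component embeds into $\mathrm{SO}(2,\mathbb R)$.
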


\begin{proof}
Fix a $G$ invariant metric $d$ on $\mathcal S$ and fix a point $p\in \mathcal S$. Since $G$ acts on $\mathcal S$ transitively,
$G/G_p$ is homeomorphic to $\mathcal S$ by Theorem \ref{homogeneous space}, where $G_p$ is the closed subgroup of $G$ which fix $p$.
Note that $G_p$ is also a Lie subgroup of $G$, since a closed subgroup of a Lie group is a Lie group (see \cite[Theorem 3.42]{War}).
Thus ${\rm dim}(G)={\rm dim}(G_p)+{\rm dim}(\mathcal S)={\rm dim}(G_p)+2$. We need only to show that
${\rm dim}(G_p)\leq 1$. Let $F$ be the connected component of $e$ in $G_p$. Then $F$ is a closed and hence a Lie subgroup of $G_p$ which
has the same dimension as that of $G_p$. If $F=e$, then ${\rm dim}(G_p)={\rm dim}(F)=0\leq 1$
and we are done. So, we assume that $F$ is a nontrivial connected Lie group in the sequel.

Take a closed disk $D$ in $\mathcal S$ such that the interior ${\rm Int}(D)$  contains $p$, and take an $\epsilon>0$ such
that the closed  ball $\overline{B_d(p, \epsilon)}\subset {\rm Int}(D)$. Since $F$ preserves the metric $d$
and leaves $p$ invariant, $B_d(p, \epsilon)$ is $F$ invariant.  Let $D_1$ be a closed disk such that
$p\in {\rm Int}(D_1)\subset D_1\subset B_d(p, \epsilon)$.  Let $K=\cup_{g\in F}gD_1$. Then $K$ is an $F$ invariant
locally connected continuum by \ref{peano}, which is the closure of the connected open set $\cup_{g\in F}g({\rm Int}(D_1))$.
Similar to the argument in \cite[corollary 4.7]{BK}, we then get an $F$ invariant closed disk $D_3\subset {\rm Int}(D)$
with $p\in {\rm Int}(D_3)$. Then the boundary $\partial D_3$ is an $F$ invariant simple closed curve; in fact,
$\partial D_3$ is the boundary of the (unique) unbounded component of ${\rm Int}(D)\setminus K$.

Define a map $R: F\rightarrow {\rm Homeo}(\partial D_3)$ by letting $R(g)=g|\partial D_3$.
If there is some $g\not=e\in F$ such that $g$ fixes every point of $\partial D_3$,
then $g$ fixes every point of $\mathcal S$ (see the proof of \cite[Lemma 4.8]{BK}),
which contradicts the assumption that the action of $G$ on $\mathcal S$ is faithful.
Therefore, $R$ is a continuous isomorphism between $F$ and $R(F)$; particularly, $R(F)$ is a
compact connected subgroup of ${\rm Homeo}(\partial D_3)$. Then
we get ${\rm dim}(F)={\rm dim}(R(F))=1$ by Theorem \ref{compact unique}.

All together, we have ${\rm dim}(F)\leq 1$ and hence ${\rm dim}(G)\leq 3$.
\end{proof}

\section{Proof of the main theorem}

\begin{proof}

Assume to the contrary that there is a distal minimal action $\phi:\Gamma\rightarrow {\rm Homeo}(\mathcal S)$.
  By Theorem \ref{maximal fator}, there is a nontrivial equicontinuous factor $(K, \Gamma, \psi)$
(which is also minimal). Set $H=\overline{\psi(\Gamma)}$. From Theorem \ref{compact group},  $H$ is a compact subgroup of ${\rm Homeo}(K)$.
Applying Theorem \ref{small group}, we can take a small normal subgroup $H'$ of $H$ such that $H/H'$ is a Lie group and $H'y$ is a proper
subset of $K$ for every $y\in K$. Then we get a continuous transitive
action $\psi'$ of the Lie group $H/H'$ on the quotient space $K/H'$ by Theorem \ref{induce action}; in particular, $K/H'$ is a connected compact manifold
of dimension $\geq 1$. Since $(K/H', \Gamma, \psi'\psi)$ is a nontrivial factor of $(\mathcal S, \Gamma, \phi)$, by Theorem \ref{homomorphism open}
and Theorem \ref{upper dimension}, we have ${\rm dim} (K/H')\leq 2$.  It follows from Theorem \ref{compact unique} and Theorem \ref{dimension surface} that
${\rm dim}(H/H')\leq 3$. Since $\Gamma$ acts on $K/H'$ minimally and $K/H'$ is of dimension $\geq 1$, $\psi'\psi(\Gamma)$ cannot be finite.
Thus ${\dim}(H/H')\geq 8$ by Theorem \ref{lower dimension}, which is a contradiction.

\end{proof}

\subsection*{Acknowledgements}
I would like to thank Professor Xiaochuan Liu for letting me notice the reference \cite{BK}.

%\newpage

\end{document}